\definecolor{citation}{rgb}{0.2,0.58,0.2} 
\definecolor{formula}{rgb}{0.1,0.2,0.6}
\definecolor{url}{rgb}{0.3,0,0.5}
\newcommand{\reqnomode}{\tagsleft@false}
\newcommand{\cd}[1]{\textcolor[rgb]{0.00,0.00,0.00}{#1}}
\DeclareRobustCommand*{\bfseries}{%
  \not@math@alphabet\bfseries\mathbf
  \fontseries\bfdefault\selectfont
  \boldmath
}
\newlength{\defbaselineskip}
\newcommand{\setlinespacing}[1]
           {\setlength{\baselineskip}{#1 \defbaselineskip}}
\newtheorem{thrm}{Theorem}[section]
\newtheorem{lmm}{Lemma}[section]
\numberwithin{equation}{section}
\newcommand\eps\varepsilon
\def\eqn#1$$#2$${\begin{equation}\label#1#2\end{equation}}
\newcommand{\be}{\begin{equation}}
\newcommand{\ee}{\end{equation}}
\newcommand{\snr}[1]{\lvert #1\rvert}
\def\name[#1, #2]{#1 #2}
\title[Uniform ellipticity and $p$-$q$ growth]{Uniform ellipticity and $p$-$q$ growth}
\author[De Filippis]{Cristiana De Filippis}  \address{Cristiana De Filippis\\Mathematical Institute, University of Oxford\\ Andrew Wiles Building, Radcliffe Observatory Quarter, Woodstock Road, Oxford, OX26GG, Oxford, United Kingdom} \email{\texttt{Cristiana.DeFilippis@maths.ox.ac.uk}}
\author[Leonetti]{Francesco Leonetti}  \address{Francesco Leonetti\\Universit\`a di L'Aquila, 
 Dipartimento di Ingegneria e Scienze dell'Informazione e Matematica\\ Via Vetoio snc, 67100 L'Aquila, Italy} \email{\texttt{leonetti@univaq.it}}
\begin{document}

\subjclass[2010]{35J60, 35J70\vspace{1mm}} 

\keywords{Regularity, uniform ellipticity, $p$-$q$-growth\vspace{1mm}}

\thanks{{\it Acknowledgements.}\ C. De Filippis is supported by the Engineering and Physical Sciences Research Council (EPSRC): CDT Grant Ref. EP/L015811/1. F. Leonetti is supported by MIUR, GNAMPA, INdAM, UNIVAQ. 
\vspace{1mm}}

\maketitle

\begin{abstract}
Fix any two numbers $p$ and $q$, with $1<p<q$; we give an example of an integral functional enjoying uniform ellipticity and $p$-$q$ growth. 
\end{abstract}
\vspace{3mm}
{\small \tableofcontents}

\setlinespacing{1.08}

\section{Introduction}

\noindent
We consider integral functionals
\begin{equation}
\label{integral_functional}
\int_{\Omega} f(Du(x)) dx,
\end{equation}
\noindent
where $u:\Omega \subset \mathbb{R}^n \to \mathbb{R}^N$, $\Omega$ is bounded and open and $f$ is continuous and nonnegative.
About $f$ we assume $p$-$q$ growth
\begin{equation}
\label{p-q_growth}
c_1 |z|^p - c_2 \leq f(z) \leq c_3 |z|^q + c_4,
\end{equation}

\noindent
where $c_1, c_2, c_3, c_4, p, q$ are constants with $c_1, c_3 \in (0, +\infty)$, $c_2, c_4 \in [0, +\infty)$ and $1 < p < q$.
In this framework it is usual to assume that

\begin{equation}
\label{p-2_growth}
c_5 (\mu + |z|)^{p-2} \leq \langle DDf(z) \frac{\lambda}{|\lambda|}, \frac{\lambda}{|\lambda|} \rangle,
\end{equation}

\noindent
and

\begin{equation}
\label{q-2_growth}
|DDf(z)| \leq c_6 (\mu + |z|)^{q-2},
\end{equation}

\noindent
where $c_5, c_6, \mu$ are constants with $c_5, c_6 \in (0, +\infty)$ and \cd{$\mu\in [0,1]$}
.
\cd{Retaining only the informations about the growth in the large of the second derivative, as prescribed by (\ref{p-2_growth})-(\ref{q-2_growth}), leads to the following bound on the ratio between the highest and the lower eigenvalue of $DDf$:}
\begin{equation}
\label{ratio}
\cd{\mathcal{R}(z):=}\frac{\text{highest eigenvalue of }DDf(z)}{\text{lowest eigenvalue of }DDf(z)}
\leq c_7 (\mu + |z|)^{q-p},
\end{equation}

\noindent
for some positive constant $c_7$. 
\cd{The right hand side of (\ref{ratio}),} \cd{evidently blows up as $\snr{z}\to \infty$, given that, in general, $q>p$. On the other hand, if by any chance the integrand $f$ features certain structural properties which make $\mathcal{R}(z)$ bounded from above by a constant non-depending, in particular, from $z$, then we have uniform ellipticity.} 
We are concerned with regularity of minimizers $u:\Omega \subset \mathbb{R}^n \to \mathbb{R}^N$ of (\ref{integral_functional}); in this framework of $p$-$q$ growth, the following bound sometimes appears

\begin{equation}
\label{q_close_to_p}
q < p + c(n,p),
\end{equation}

\noindent
where $c(n,p)$ is positive and tends to $0$ when the dimension $n$ tends to $+\infty$; see \cite{mar1989, Moscariello_Nania1991, Fusco_Sbordone1993, Esposito_Leonetti_Mingione1999, Esposito_Leonetti_Mingione2004, Baroni_Colombo_Mingione2015, Eleuteri_Marcellini_Mascolo2016, Cupini_Marcellini_Mascolo2017} and \cite[Section 6]{Mingione2006}; see also \cite[Section 6.2]{DeFilippis-Mingione} where a simple argument is given.  
Now we assume the following structure condition
\begin{equation}
\label{structure}
f(z) = g(|z|),
\end{equation}
\noindent 
with $g:[0, +\infty) \to [0, +\infty)$. 
Some papers require $g(0)=0$, $g \in C^2((0,+\infty)) \cap C^1([0,+\infty))$ with  $g'(t)>0$ for $t>0$; moreover,
\cite{Lieberman, Diening_Ettwein, Cianchi_Maz_ya_Archive2014, Beck_Mingione} ask for
\begin{equation}
\label{assumption_BM}
0 < m \leq \frac{g''(t) t}{g'(t)} \leq M < +\infty
\qquad
\forall t > 0.
\end{equation}
Note that \cite{Baroni} requires \eqref{assumption_BM} with $1 \leq m$; on the other hand, \cite{Leonetti_Mascolo_Siepe2003} asks for 
$M \leq 1$.  
In \cite{Bildhauer_Fuchs} they consider splitting densities $f(Du) = a(|(D_1 u,...,D_{n-1} u)|) + b(|D_n u|)$ and they require  
\eqref{assumption_BM} for both $a$ and $b$. 
We remark that $g'>0$ and \eqref{assumption_BM} forces $g''>0$, so $g$ must be strictly convex; on the other hand, \eqref{assumption_BM} allows $p$-$q$ growth whatever $p$ and $q$ are: in this paper we fix $p$ and $q$ with $1<p<q$, no matter how far they are, and we show a convex function $g$ verifing \eqref{assumption_BM}, with $p$-$q$ growth. 
In \cite{Beck_Mingione} we find Theorem 1.15 that says
\begin{thrm}
\label{thm_BM}
We assume that $g(0)=0$ and $g \in C^2((0,+\infty)) \cap C^1([0,+\infty))$; moreover,  $g'(t)>0$ for $t>0$ and \eqref{assumption_BM} holds true.
If $u \in W^{1,1}_{loc}(\Omega,\mathbb{R}^N)$ is a local minimizer of (\ref{integral_functional}) under the structure condition 
(\ref{structure}) with $g$ as before, then $u$ is locally Lipschitz continuous in $\Omega$.
\end{thrm}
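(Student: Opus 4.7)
The plan is a standard regularize-and-iterate scheme specialized to radial, uniformly elliptic Orlicz-type integrands. Since \eqref{assumption_BM} forces $g'(t)t \sim g(t)$ at infinity, integration gives $g(t) \sim t^{m+1}$ from below and $g(t) \sim t^{M+1}$ from above, so the functional satisfies $p$-$q$ growth with $p = m+1$ and $q = M+1$, and the minimizer, initially in $W^{1,1}_{loc}$, actually lies in $W^{1,p}_{loc}$ by a direct comparison with smooth competitors. I would then regularize by setting $g_\varepsilon(t) := g(\sqrt{t^2 + \varepsilon^2}) - g(\varepsilon) + \tfrac{\varepsilon}{2} t^2$, so that $g_\varepsilon$ is smooth, non-degenerate, and still satisfies \eqref{assumption_BM} with constants $(m_0, M_0)$ independent of $\varepsilon$. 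On a fixed ball $B_R \Subset \Omega$, let $u_\varepsilon$ minimize $v \mapsto \int_{B_R} g_\varepsilon(|Dv|)\,dx$ in $u + W^{1,M_0+1}_0(B_R,\mathbb{R}^N)$; classical theory for non-degenerate standard-growth functionals furnishes $u_\varepsilon \in C^{1,\alpha}_{loc}(B_R,\mathbb{R}^N) \cap W^{2,2}_{loc}(B_R,\mathbb{R}^N)$ and strong $W^{1,p}$ convergence $u_\varepsilon \to u$ as $\varepsilon \to 0$.

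The heart of the argument is a uniform $L^\infty$ bound on $|Du_\varepsilon|$. Differentiating the Euler-Lagrange system in the direction $x_s$ and testing with $\varphi = \eta^2 D_s u_\varepsilon \cdot \Phi(|Du_\varepsilon|^2)$ for $\Phi$ non-decreasing, the radial structure $f_\varepsilon(z) = g_\varepsilon(|z|)$ ensures that the eigenvalues of $D^2 f_\varepsilon(z)$ are pinched between $c\,g_\varepsilon''(|z|)$ and $C\,g_\varepsilon''(|z|)$ with $C/c$ depending only on $m, M$. This yields the Caccioppoli-type inequality
\begin{equation*}
\int_{B_R} \eta^2\, g_\varepsilon''(|Du_\varepsilon|)\, |D^2 u_\varepsilon|^2\, \Phi(|Du_\varepsilon|^2)\,dx \leq C \int_{B_R} |D\eta|^2\, g_\varepsilon''(|Du_\varepsilon|)\, |Du_\varepsilon|^2\, \Phi(|Du_\varepsilon|^2)\,dx,
\end{equation*}
with $C = C(m,M,n,N)$. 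Applying a Sobolev embedding to the auxiliary quantity $V_\varepsilon(t) := \int_0^t \sqrt{s\, g_\varepsilon''(s)}\,ds$ converts the Caccioppoli into a reverse-H\"older-type gain, and a Moser iteration on $\Phi(t) = (t+k)^\beta$ with $\beta \to \infty$ produces
\begin{equation*}
\sup_{B_{R/2}} |Du_\varepsilon| \leq C\left(\frac{1}{|B_R|} \int_{B_R} |Du_\varepsilon|^{p}\,dx\right)^{1/p} + 1,
\end{equation*}
with $C$ independent of $\varepsilon$. Passing to the limit $\varepsilon \to 0$ via weak$^*$ lower semicontinuity of the $L^\infty$-norm then gives $u \in W^{1,\infty}_{loc}(\Omega,\mathbb{R}^N)$.

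The hard part will be the Moser iteration in the vector-valued setting $N > 1$: by the \v{S}ver\'ak-Yan-type counterexamples, everywhere Lipschitz regularity is false without some structural hypothesis, so one must use the radial form $f(z) = g(|z|)$ crucially to guarantee that $D^2 f_\varepsilon(z)$ is uniformly elliptic on \emph{all} matrix directions and that the chosen test function yields a sign-definite left-hand side. A secondary technical point is verifying that the iteration constants remain controlled uniformly in the iteration index $\beta$ and in the regularization parameter $\varepsilon$; this is precisely where the boundedness of $M/m$ --- the uniform ellipticity encoded in \eqref{assumption_BM} --- is essential, as each step of Moser multiplies constants by factors depending on the ellipticity ratio, and a blow-up of this ratio (as happens under the mere $p$-$q$ growth \eqref{p-2_growth}--\eqref{q-2_growth}) would force the bound \eqref{q_close_to_p} rather than the unconditional result stated in the theorem.
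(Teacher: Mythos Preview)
The paper does not contain a proof of this theorem. Theorem~\ref{thm_BM} is explicitly quoted from \cite{Beck_Mingione} (the paper introduces it with ``In \cite{Beck_Mingione} we find Theorem 1.15 that says''), and the present paper's contribution is not to prove it but to exhibit, for any prescribed $1<p<q$, an explicit $g$ satisfying all of its hypotheses while having genuine $p$-$q$ growth. That construction is Theorem~\ref{thm_g}, and its proof in Section~4 is an explicit computation of $g'$ and $g''$ for the concrete function \eqref{definition_for_g}, combined with the elementary bounds of Lemmas~\ref{estimate_for_varphi_prime_1}--\ref{estimate_for_varphi_prime_prime} on $\varphi'(t)t\ln t$, $\varphi'(t)t$, and $\varphi''(t)t^2\ln t$. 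There is therefore no ``paper's own proof'' of Theorem~\ref{thm_BM} to compare against.

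Your sketch is a reasonable high-level outline of how a Lipschitz bound of this type is obtained (regularize, derive a Caccioppoli inequality exploiting that the eigenvalues of $D^2f$ are all comparable to $g''(|z|)$ via \eqref{assumption_BM} and \eqref{quadratic_form}, run Moser iteration, pass to the limit), and this is indeed in the spirit of the argument in \cite{Beck_Mingione}. But it addresses the wrong target: if the task is to engage with the proofs in \emph{this} paper, the result that is actually proved here is Theorem~\ref{thm_g}, and the work lies in verifying \eqref{statement_1}--\eqref{statement_3} for the specific oscillating-exponent example, not in establishing the abstract regularity statement.
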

\noindent 
We are going to show an example for the previous Theorem \ref{thm_BM}: fix $p$ and $q$ with 
$1<p<q$, then we give $g$ satisfying all the assumptions of Theorem \ref{thm_BM} with the chosen $p$ and $q$: the restriction (\ref{q_close_to_p}) does not apply! Moreover, such a $g$ gives   an $f$ for which we have uniform ellipticity; indeed, let $g$ be any function in $C^2((0,+\infty))$ with $g'(t)>0$ for $t>0$, satisfying assumption 
\eqref{assumption_BM}; 
 then, for the corresponding $f$ given by (\ref{structure}), we have
\begin{equation*}
\frac{\partial f}{\partial z^{\alpha}_{i}} (z) = g^{\prime}(|z|) \frac{z^{\alpha}_{i}}{|z|}
\end{equation*}
and
\begin{equation*}
\frac{\partial^2 f}{\partial z^{\alpha}_{i}\partial z^{\beta}_{j}} (z) = 
\left[
g^{\prime\prime}(|z|) - \frac{g^{\prime}(|z|)}{|z|} 
\right]\cd{\frac{z^{\alpha}_{i}z^{\beta}_{j}}{\snr{z}^{2}}}
+
\frac{g^{\prime}(|z|)}{|z|}
\delta^{\alpha\beta}
\delta_{ij},
\end{equation*}
so that
\begin{equation}
\label{quadratic_form}
\left\langle 
DDf(z) \frac{\lambda}{|\lambda|}, \frac{\lambda}{|\lambda|}
\right\rangle
=
\left[
g^{\prime\prime}(|z|) - \frac{g^{\prime}(|z|)}{|z|} 
\right]
\left\langle
\frac{z}{|z|}, \frac{\lambda}{|\lambda|}
\right\rangle^2
+
\frac{g^{\prime}(|z|)}{|z|};
\end{equation}
if we consider first the case $[...] \geq 0$ 
and then the other case
$[...] < 0$, using \eqref{assumption_BM}, 
we get
\begin{equation}
\label{ratio_2}
\frac{\text{highest eigenvalue of }DDf(z)}{\text{lowest eigenvalue of }DDf(z)}
\leq
\max 
\left\{M; \frac{1}{m} \right\};
\end{equation}
so, we are in the uniform ellipticity regime. So, after fixing $p$ and $q$ at will in $(1, +\infty)$, we are going to write an example of functional with $p$-$q$ growth and unifom ellipticity.
For $1<p<q$, set $a=\frac{p+q}{2}$ and $b=\frac{q-p}{2}$. Then, we have $a,b >0$, $1 <p = a-b < a+b = q$ and we can use the function $g$ defined in the next section \ref{example}.

\section{Example}
\label{example}

\noindent
We fix $a,b \in (0, +\infty)$ with
\begin{equation}
\label{conditions_for_a_b}
1<a-b.
\end{equation}
\noindent
We consider $g:[0, +\infty) \to [0, +\infty)$ such that
\begin{equation}
\label{definition_for_g}
g(t) = t^{a+b\sin(\varphi(t))},
\end{equation}
\noindent
where $\varphi: \mathbb{R} \to \mathbb{R}$ is 
given by
\begin{equation}
\label{definition_for_varphi}
\varphi(t)
=
\left\{
\begin{array}{ll}
\frac{3}{2} \pi & \text{ if } t \in (-\infty, 1],
\\
  &  
\\
\frac{3}{2} \pi  + \varepsilon \ln \ln (e + (t-1)^4) & \text{ if } t \in (1, +\infty),
\end{array}
\right.
\end{equation}
\noindent
with $\varepsilon >0$. 
Note that 
\begin{equation}
\label{varphi_prime}
\varphi^{\prime}(t)
=
\left\{
\begin{array}{ll}
0 & \text{ if } t \in (-\infty, 1],
\\
  &  
\\
 \frac{\varepsilon}{\ln (e + (t-1)^4)} \, \, \frac{4 (t-1)^3}{e + (t-1)^4} & \text{ if } t \in (1, +\infty)
\end{array}
\right.
\end{equation}
\noindent
and
\begin{equation}
\label{varphi_prime_prime}
\varphi^{\prime\prime}(t)
=
\left\{
\begin{array}{ll}
0 &  \text{ if } t \in (-\infty, 1],
\\
  &  
\\
\varepsilon
\left\{
 \frac{-1}{[\ln (e + (t-1)^4)]^2} \left[ \frac{4 (t-1)^3}{e + (t-1)^4}\right]^2 +
\right.
& 
\\ 
\phantom{aaaaaaaaaa}
\left.
\frac{1}{\ln (e + (t-1)^4)} \, \, \frac{12(t-1)^2 e - 4 (t-1)^6}{[e + (t-1)^4]^2}
\right\}
 & \text{ if } t \in (1, +\infty)
\end{array}
\right.
\end{equation}
\noindent
so that $\varphi \in C^2(\mathbb{R})$. Note that $\varphi^{\prime}(t)>0$ when $t>1$; moreover,  
$\lim\limits_{t \to +\infty} \varphi(t) = +\infty$. Then $\varphi(t)$ increases and takes all the values of the interval $[\frac{3}{2} \pi, +\infty)$. This means that, in (\ref{definition_for_g}), the exponent $a+b\sin(\varphi(t))$ oscillates between $a-b$ and $a+b$ infinitely many times as $t$ goes from $0$ to $+\infty$; then $g(t)$ has $a-b$ growth from below and $a+b$ growth from above.
As far as $\varepsilon$ is concerned, we require that 
\begin{equation}
\label{epsilon_1}
0 < \varepsilon < 
\min
\left\{
1;
\,\, 
\frac{a-1-b}{224\,\,b}
\right\}.
\end{equation}

\noindent
We are going to prove the next

\begin{thrm}
\label{thm_g}
Let us consider $a,b \in (0, +\infty)$ verifing (\ref{conditions_for_a_b}); we take $g(t)$ given by (\ref{definition_for_g}) where $\varphi$ is defined in (\ref{definition_for_varphi}) and $\varepsilon$ satisfies (\ref{epsilon_1}). Then $g:[0, +\infty) \to [0, +\infty)$, $g(0)=0$, $g(t) > 0$ for $t>0$, $g \in C^1([0, +\infty)) \cap C^2((0, +\infty))$, $\lim\limits_{t \to 0^+}\frac{g(t)}{t} = 0$, $\lim\limits_{t \to +\infty}\frac{g(t)}{t} = +\infty$, $g^{\prime}(0) = 0$ and  
\begin{equation}
\label{statement_1}
0 < \{ -b8\varepsilon + a - b \} \frac{g(t)}{t} \leq g^{\prime}(t) \leq \{ b8\varepsilon + a + b \} \frac{g(t)}{t}
\end{equation}
for every $t>0$; 
moreover, 
\begin{equation}
\label{statement_2}
0 <
\{-b 8 \varepsilon + a - b\}
t^{a-b-1} 
\leq
g^{\prime}(t)
\leq
\{b 8 \varepsilon + a + b\}
[t^{a-b-1} + t^{a+b-1}]
\end{equation}
for every $t>0$. 
As far as $g^{\prime\prime}$ is concerned, we get
\begin{equation}
\label{statement_3}
0 < \{ -224b\varepsilon + a - 1 - b \} \frac{g^{\prime}(t)}{t} \leq g^{\prime\prime}(t) \leq \{ 224 b\varepsilon + a - 1 + b \} \frac{g^{\prime}(t)}{t}
\end{equation}
for every $t>0$, 
thus $g$ is strictly convex in $[0, +\infty)$.

\end{thrm}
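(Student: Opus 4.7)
\emph{Plan of proof.} The argument naturally splits according to whether $t\le 1$ or $t>1$. For $t\in[0,1]$ one has $\varphi(t)\equiv\tfrac{3}{2}\pi$ and hence $\sin\varphi(t)=-1$, so $g(t)=t^{a-b}$. Since $a-b>1$ by \eqref{conditions_for_a_b}, the assertions $g(0)=0$, $g'(0)=0$, $\lim_{t\to 0^+}g(t)/t=0$, and the inequalities \eqref{statement_1}--\eqref{statement_3} restricted to $(0,1]$ all reduce to elementary identities for the pure power $t^{a-b}$. The matching of $g'$ and $g''$ at $t=1$ from the two sides is ensured by the cubic factor $(t-1)^3$ in \eqref{varphi_prime}, which gives $\varphi'(1^+) = \varphi''(1^+) = 0$, while $g(t)/t\to+\infty$ as $t\to+\infty$ follows at the end from the lower bound $g(t)\ge t^{a-b}$ together with $a-b>1$.

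For $t>1$ I would logarithmically differentiate. Setting $E(t):=a+b\sin\varphi(t)$ and $h:=\log g$, the identity $h=E\log t$ yields
\begin{equation*}
\frac{tg'(t)}{g(t)} = th'(t) = E(t)+tb\cos\varphi(t)\,\varphi'(t)\log t,
\end{equation*}
and since $g''/g'=h'+h''/h'$, a direct computation of $h''$ shows that $tg''(t)/g'(t)$ equals $E(t)-1$ plus a remainder that is a rational function in the quantities $t\varphi'$, $t\varphi'\log t$, $t^2(\varphi')^2\log t$ and $t^2\varphi''\log t$, with coefficients that are $O(b)$ and a denominator equal to $E(t)+tb\cos\varphi\,\varphi'\log t$.

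The technical heart of the argument is therefore to prove, from the explicit formulas \eqref{varphi_prime}--\eqref{varphi_prime_prime}, three elementary estimates valid for all $t>1$:
\begin{equation*}
|t\varphi'(t)|\le 8\varepsilon,\qquad |t\varphi'(t)\log t|\le 8\varepsilon,\qquad |t^2\varphi''(t)\log t|\le C\varepsilon,
\end{equation*}
for an absolute constant $C$ which, once the whole chain is assembled, produces the factor $224$ appearing in \eqref{statement_3}. These are proved by splitting into the ranges $1<t\le 2$ and $t\ge 2$: in the first range $(t-1)^3\le 1$, $t\log t\le 2\log 2$ and $\ln(e+(t-1)^4)\ge 1$ make everything immediate; in the second range one uses $4(t-1)^3/(e+(t-1)^4)\le 4/(t-1)$ together with $t/(t-1)\le 2$, while the denominator $\ln(e+(t-1)^4)\ge 4\log(t-1)$ absorbs the factor $\log t$. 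The term of $\varphi''$ that is quadratic in $4(t-1)^3/(e+(t-1)^4)$ is handled through the identity
\begin{equation*}
\frac{4(t-1)^3}{e+(t-1)^4} = \frac{\varphi'(t)\,\ln(e+(t-1)^4)}{\varepsilon},
\end{equation*}
which converts it into $(\varphi')^2/\varepsilon$ and hence into a product of the first two estimates.

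With these inequalities in hand, \eqref{statement_1} is immediate because $E(t)\in[a-b,a+b]$ and the correction $tb\cos\varphi\,\varphi'\log t$ has modulus at most $8\varepsilon b$. For \eqref{statement_2}, one combines \eqref{statement_1} with $g(t)\ge t^{a-b}$ on $(0,+\infty)$ (equality on $(0,1]$, and on $(1,+\infty)$ use $t>1$ together with the exponent being $\ge a-b$) and $g(t)\le t^{a+b}$ for $t\ge 1$. For \eqref{statement_3}, the denominator in the remainder of $tg''/g'$ is $E(t)+tb\cos\varphi\,\varphi'\log t\ge a-b-8\varepsilon b>1$ thanks to \eqref{conditions_for_a_b} and \eqref{epsilon_1}, while the numerator is controlled by a constant multiple of $\varepsilon b$ via the three key estimates above; the choice \eqref{epsilon_1} is exactly what is needed to keep the lower bound in \eqref{statement_3} strictly positive, yielding $g''>0$ on $(0,+\infty)$ and hence strict convexity. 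The main obstacle is the bookkeeping: each individual inequality for $\varphi'$ and $\varphi''$ is elementary, but the chain leading to the constant $224$ must be tracked carefully so that it matches exactly the threshold $\varepsilon<(a-1-b)/(224\,b)$ imposed in \eqref{epsilon_1}.
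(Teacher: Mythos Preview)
Your overall strategy is exactly the paper's: reduce to the pure power $t^{a-b}$ on $(0,1]$, logarithmically differentiate on $(1,+\infty)$ to write $tg'/g$ and $tg''/g'$ in terms of $E(t)$ and correction terms governed by $t\varphi'$, $t\varphi'\ln t$, $t^2(\varphi')^2\ln t$, $t^2\varphi''\ln t$, and then bound those four quantities by absolute multiples of $\varepsilon$. The paper organises the computation as $g''(t)t=g'(t)\Phi_1(t)+\tfrac{g(t)}{t}\Phi_2(t)$ and converts $\tfrac{g(t)}{t}$ back to $g'(t)$ via the already-proved \eqref{statement_1}; your use of $g''/g'=h'+h''/h'$ is an equivalent rewriting.

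There is, however, a concrete gap in your sketch of the key estimate $|t\varphi'(t)\ln t|\le 8\varepsilon$. Your proposed split at $t=2$ with the bound $\ln(e+(t-1)^4)\ge 4\ln(t-1)$ on the second range does \emph{not} absorb $\ln t$: the ratio $\ln t/(4\ln(t-1))$ blows up as $t\to 2^+$ (since $\ln(t-1)\to 0$ while $\ln t\to\ln 2$), so you cannot conclude $\ln t/\ln(e+(t-1)^4)\le 1$ this way. The paper avoids this by splitting at $t=e$ instead (Lemma~\ref{estimate_for_ratio_4}): for $1<t\le e$ one has $\ln t\le 1\le\ln(e+(t-1)^4)$ trivially, while for $t>e$ one checks $t<(t-1)^2$ (the larger root of $t^2-3t+1=0$ is $(3+\sqrt5)/2<e$), whence $\ln t<2\ln(t-1)<\ln((t-1)^4)<\ln(e+(t-1)^4)$. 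With this correction your argument goes through and the bookkeeping matches the constant $224$; the remaining estimates you list ($|t\varphi'|\le 8\varepsilon$ and $|t^2\varphi''\ln t|\le 128\varepsilon$) are handled in the paper by the elementary ratio bounds of Lemmas~\ref{estimate_for_ratio_1}--\ref{estimate_for_ratio_3}, which are close in spirit to your $1<t\le 2$ versus $t\ge 2$ split and do work for those two quantities.
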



 

 
\noindent
The present example is a modification of the one given in \cite{Fusco_Sbordone1990,Talenti1990}; in the present example the small new parameter $\varepsilon$ appears and it makes possible to get convexity and $p$-$q$ growth with any $p$ and $q$ 
.

\section{Preliminary results}\label{pre}

\noindent
We need some preliminary estimates.

\begin{lmm}
\label{estimate_for_ratio_1} \cd{For all $t\in (1,\infty)$ there holds that:}
\begin{equation}
\label{inequality_for_ratio_1}
0 < \frac{(t-1)^3}{e+(t-1)^4} < 1.
\end{equation}
\end{lmm}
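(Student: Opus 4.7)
The left inequality is immediate, since for $t>1$ both $(t-1)^3$ and $e+(t-1)^4$ are strictly positive. The content is the upper bound, so I will focus on that.

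My plan is to substitute $s:=t-1$, so that $s$ ranges over $(0,+\infty)$, and reformulate the right inequality as
\[
s^3 < e + s^{4} \qquad \text{for all } s \in (0,+\infty),
\]
equivalently $\psi(s):=e+s^{4}-s^{3}>0$ on $(0,+\infty)$. The natural way to obtain this is by minimising $\psi$ via one-variable calculus: compute
\[
\psi'(s)=4s^{3}-3s^{2}=s^{2}(4s-3),
\]
which is negative on $(0,3/4)$ and positive on $(3/4,+\infty)$, so the infimum of $\psi$ on $[0,+\infty)$ is attained at $s=3/4$.

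It then remains to evaluate
\[
\psi\!\left(\tfrac{3}{4}\right)
= e + \left(\tfrac{3}{4}\right)^{4} - \left(\tfrac{3}{4}\right)^{3}
= e - \left(\tfrac{3}{4}\right)^{3}\!\cdot \tfrac{1}{4}
= e - \tfrac{27}{256},
\]
which is strictly positive since $e>2>\tfrac{27}{256}$. This yields $\psi(s)>0$ for every $s>0$, whence the claimed bound $(t-1)^{3}/[e+(t-1)^{4}]<1$ on $(1,+\infty)$. There is no real obstacle here; the only thing to double-check is the sign analysis of $\psi'$ and the numerical bound $e>27/256$, both of which are elementary. An alternative (and equally short) route would be to maximise the original ratio $s\mapsto s^{3}/(e+s^{4})$ directly, finding the unique critical point at $s=(3e)^{1/4}$ and checking that the maximum value $3^{3/4}/(4e^{1/4})$ is $<1$; I would prefer the first route because the arithmetic at $s=3/4$ is cleaner.
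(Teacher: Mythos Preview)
Your proof is correct. The substitution $s=t-1$, the derivative computation $\psi'(s)=s^{2}(4s-3)$, the identification of the global minimum at $s=3/4$, and the evaluation $\psi(3/4)=e-27/256>0$ are all fine.

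The paper takes a slightly different, calculus-free route: it splits into the cases $0<t-1\le 1$ and $t-1>1$. In the first case $(t-1)^{3}\le 1$, so the fraction is at most $1/e<1$; in the second case $(t-1)^{3}<(t-1)^{4}$, so the fraction is at most $(t-1)^{4}/[e+(t-1)^{4}]<1$. What the paper's approach buys is that it avoids differentiation and any numerical evaluation beyond $1/e<1$; what your approach buys is a single uniform argument with no case distinction and an explicit quantitative margin (the minimum of $e+s^{4}-s^{3}$ is $e-27/256$). Both are entirely adequate for this elementary lemma.
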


\begin{proof}
If $1<t\leq 2$, then $0<t-1\leq 1$ so that
$$
0<\frac{(t-1)^3}{e+(t-1)^4} \leq \frac{1}{e+(t-1)^4} \leq \frac{1}{e} < 1.
$$
If $2<t$, then $1<t-1$ so that
$$
0<\frac{(t-1)^3}{e+(t-1)^4} <
\frac{(t-1)^4}{e+(t-1)^4}
 < 1.
$$
\noindent
The two cases give (\ref{inequality_for_ratio_1}). 
\end{proof}

\begin{lmm}
\label{estimate_for_ratio_2}\cd{For all $t\in (1,\infty)$ there holds that:}
\begin{equation}
\label{inequality_for_ratio_2}
0 < \frac{(t-1)^3t}{e+(t-1)^4} < 2.
\end{equation}
\end{lmm}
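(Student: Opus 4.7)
The plan is to mirror the case-splitting argument used for Lemma \ref{estimate_for_ratio_1}, breaking the interval $(1,\infty)$ into the two pieces $1 < t \leq 2$ and $t > 2$. Positivity is immediate on all of $(1,\infty)$ because the numerator $(t-1)^3 t$ and the denominator $e + (t-1)^4$ are both strictly positive there, so the only content is the upper bound by $2$.

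For the small range $1 < t \leq 2$, I would exploit the fact that $0 < t-1 \leq 1$, which gives $(t-1)^3 \leq 1$, combined with $t \leq 2$, to bound the numerator by $2$. Since the denominator satisfies $e + (t-1)^4 \geq e > 1$, the quotient is at most $2/e < 2$, which gives the desired strict inequality.

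For the large range $t > 2$, the key observation is that $t < 2(t-1)$ when $t-1 > 1$ (since $t = (t-1) + 1 < (t-1) + (t-1)$). This turns the bound into
\[
\frac{(t-1)^3 t}{e + (t-1)^4} < \frac{2(t-1)^4}{e + (t-1)^4} < 2,
\]
where the last strict inequality is trivial because $e > 0$.

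I don't anticipate any real obstacle: the statement is essentially Lemma \ref{estimate_for_ratio_1} multiplied by $t$, and the extra factor of $t$ is harmless as long as one treats the neighborhood of $t=1$ (where $(t-1)^3$ is small but $t$ is near $1$) and the asymptotic regime (where $t$ is comparable to $t-1$) separately. Combining the two cases then yields \eqref{inequality_for_ratio_2}.
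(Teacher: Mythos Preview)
Your proof is correct but follows a different route from the paper. The paper avoids any case splitting: it writes $t=(t-1)+1$ and decomposes the fraction as
\[
\frac{(t-1)^3 t}{e+(t-1)^4}
=\frac{(t-1)^4}{e+(t-1)^4}+\frac{(t-1)^3}{e+(t-1)^4},
\]
bounding the first summand trivially by $1$ and the second by $1$ via Lemma~\ref{estimate_for_ratio_1}. Your argument instead reproduces the two-regime split used in the proof of Lemma~\ref{estimate_for_ratio_1}, handling $1<t\le 2$ by size and $t>2$ by the comparison $t<2(t-1)$. The paper's decomposition is a bit cleaner (no cases, and it leverages the preceding lemma directly), while your version is self-contained and does not rely on Lemma~\ref{estimate_for_ratio_1}; both are equally elementary and give the same strict bound.
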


\begin{proof}
We write $t=(t-1) + 1$ and we get
$$
0<\frac{(t-1)^3t}{e+(t-1)^4} 
=
\frac{(t-1)^3(t-1)}{e+(t-1)^4} + \frac{(t-1)^3}{e+(t-1)^4} < 
\frac{(t-1)^4}{e+(t-1)^4} + 1 < 1+1,
$$
where we used (\ref{inequality_for_ratio_1}).
\end{proof}

\begin{lmm}
\label{estimate_for_ratio_3}\cd{For all $t\in (1,\infty)$ there holds that:}
\begin{equation}
\label{inequality_for_ratio_3}
0 < \frac{(t-1)^2 t^2}{e+(t-1)^4} < 4.
\end{equation}
\end{lmm}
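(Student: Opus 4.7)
The plan is to proceed exactly in the spirit of Lemmas \ref{estimate_for_ratio_1} and \ref{estimate_for_ratio_2}: rewrite $t=(t-1)+1$, expand the square, and reduce the desired bound to a sum of three ratios that can each be estimated separately (two of them by the previous lemmas or their argument, the third by direct inspection).

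Concretely, I would first write
$$
t^{2}=\bigl((t-1)+1\bigr)^{2}=(t-1)^{2}+2(t-1)+1,
$$
so that
$$
\frac{(t-1)^{2}t^{2}}{e+(t-1)^{4}}
=\frac{(t-1)^{4}}{e+(t-1)^{4}}
+\frac{2(t-1)^{3}}{e+(t-1)^{4}}
+\frac{(t-1)^{2}}{e+(t-1)^{4}}.
$$
The first summand is strictly less than $1$ since $e>0$. The second summand is strictly less than $2$ by Lemma \ref{estimate_for_ratio_1}. For the third, I would repeat the case split used in the proof of Lemma \ref{estimate_for_ratio_1}: if $1<t\leq 2$, then $(t-1)^{2}\leq 1 < e \leq e+(t-1)^{4}$; if $t>2$, then $1<t-1$, so $(t-1)^{2}<(t-1)^{4}<e+(t-1)^{4}$. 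In either case the third summand is strictly less than $1$. Adding the three strict inequalities yields the upper bound $4$; positivity is immediate since every numerator and denominator is strictly positive for $t\in(1,\infty)$.

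I do not anticipate any real obstacle here: the lemma is a direct variation of Lemma \ref{estimate_for_ratio_2}, and the only slightly new ingredient is bounding $\tfrac{(t-1)^{2}}{e+(t-1)^{4}}$ by $1$, which is handled by the same two-case argument already employed for \rif{inequality_for_ratio_1}.
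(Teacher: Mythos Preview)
Your argument is correct, but it differs from the paper's. Instead of expanding $t^{2}=(t-1)^{2}+2(t-1)+1$ and splitting the ratio into three pieces, the paper argues by a direct case split on $t$: for $1<t<2$ one bounds the numerator by $4$ and the denominator from below by $e>2$, obtaining a ratio below $2$; for $t\ge 2$ one uses $t\le 2(t-1)$, hence $(t-1)^{2}t^{2}\le 4(t-1)^{4}$, and the ratio is below $4$. Your route follows the template of Lemma~\ref{estimate_for_ratio_2} (expand $t$ around $t-1$) and recycles Lemma~\ref{estimate_for_ratio_1}, which is pleasantly modular; the paper's route is closer in spirit to the proof of Lemma~\ref{estimate_for_ratio_1} itself and is entirely self-contained. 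Both yield the same constant $4$ with equally little effort.
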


\begin{proof}
If $1<t<2$, then $0<t-1<1$ so that
$$
0<\frac{(t-1)^2 t^2}{e+(t-1)^4} <
\frac{4}{e+(t-1)^4}  < \frac{4}{e} < \frac{4}{2} = 2.
$$
If $2 \leq t$, then $t \leq 2(t-1)$ so that
$$
0<\frac{(t-1)^2 t^2}{e+(t-1)^4} \leq \frac{(t-1)^2 4(t-1)^2}{e+(t-1)^4} = \frac{4(t-1)^4 }{e+(t-1)^4} < 4.
$$
\noindent
The two cases give (\ref{inequality_for_ratio_3}).
\end{proof}

\begin{lmm}
\label{estimate_for_ratio_4}\cd{For all $t\in (1,\infty)$ there holds that:}
\begin{equation}
\label{inequality_for_ratio_4}
0 < \frac{\ln t}{\ln(e+(t-1)^4)} < 1.
\end{equation}
\end{lmm}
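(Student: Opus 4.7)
The plan is to handle the two inequalities in \eqref{inequality_for_ratio_4} separately, mimicking the case-split pattern already used in the preceding Lemmas \ref{estimate_for_ratio_1}--\ref{estimate_for_ratio_3}. Positivity of the ratio on $(1,\infty)$ is immediate: for $t>1$ we have $\ln t > 0$, and $e+(t-1)^4 \geq e > 1$ forces $\ln(e+(t-1)^4) > 0$.

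For the nontrivial upper bound, by strict monotonicity of $\ln$ the inequality $\ln t < \ln(e+(t-1)^4)$ is equivalent to the elementary comparison
\begin{equation*}
t < e + (t-1)^4 \qquad \text{for every } t>1.
\end{equation*}
I would split at $t=2$, exactly as in Lemmas \ref{estimate_for_ratio_1} and \ref{estimate_for_ratio_3}. If $1<t\leq 2$, then $t \leq 2 < e \leq e+(t-1)^4$, and the bound is immediate. If $t>2$, then $t-1>1$, so $(t-1)^4 > t-1$; combining this with $e>1$ gives
\begin{equation*}
e + (t-1)^4 > 1 + (t-1) = t,
\end{equation*}
as desired. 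The two cases together yield $t<e+(t-1)^4$ for all $t>1$, and applying $\ln$ finishes the proof.

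There is no real obstacle here: the statement is essentially the logarithm of a trivial polynomial inequality, and the only small choice to make is the threshold $t=2$ for the split, which keeps the arithmetic free of any appeal to numerical values of $e$ beyond the qualitative bounds $1<e$ and $2<e$.
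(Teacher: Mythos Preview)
Your proof is correct. It differs from the paper's argument, though, and in a way worth noting. The paper splits at $t=e$: for $1<t\le e$ it uses $\ln t\le 1<\ln(e+(t-1)^4)$, while for $t>e$ it first establishes $t<(t-1)^2$ via the quadratic $t^2-3t+1$ and a numerical estimate $\sqrt{5}<2.3$ to check that the larger root $\tfrac{3+\sqrt{5}}{2}<e$, and then chains
\[
\frac{\ln t}{\ln(e+(t-1)^4)}<\frac{\ln((t-1)^2)}{\ln((t-1)^4)}=\frac12.
\]
Your route is more direct: you reduce the whole inequality to the polynomial comparison $t<e+(t-1)^4$ and dispose of it with a split at $t=2$, using only $e>2$ and $(t-1)^4>t-1$ for $t>2$. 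This avoids the quadratic computation and the decimal approximation of $\sqrt{5}$ entirely. What the paper's argument buys in exchange is the sharper bound $\tfrac12$ for large $t$, but that refinement is never used downstream (Lemmas~\ref{estimate_for_varphi_prime_1} and~\ref{estimate_for_varphi_prime_prime} only invoke the bound $<1$), so your simpler version loses nothing for the purposes of the paper.
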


\begin{proof}
If $1<t \leq e$, then
$$
0 < \frac{\ln t}{\ln(e+(t-1)^4)} \leq \frac{\ln e}{\ln(e+(t-1)^4)} < \frac{\ln e}{\ln e} = 1.
$$
If $e<t$, then $t<(t-1)^2$: indeed, this last inequality is equivalent to $0<t^2-3t+1$; the two solutions of the equation $t^2-3t+1=0$ are 
$\frac{3-\sqrt{5}}{2}$ and $\frac{3+\sqrt{5}}{2}$; note that $5<5,29=(2,3)^2$, so that $\frac{3+\sqrt{5}}{2}< \frac{3+2,3}{2} = 2,65 < e$; then $e<t$ implies $0<t^2-3t+1$ and $t<(t-1)^2$. This last inequality allows us to write
$$
0 < \frac{\ln t}{\ln(e+(t-1)^4)} < \frac{\ln ((t-1)^2)}{\ln(e+(t-1)^4)} < \frac{\ln ((t-1)^2)}{\ln((t-1)^4)} = \frac{2}{4}.
$$
\noindent
The two cases give (\ref{inequality_for_ratio_4}).
\end{proof}

\noindent
In this section \ref{pre}, $\varphi$ is given by (\ref{definition_for_varphi}) with any $\varepsilon > 0$: \cd{in the forthcoming lemmas,} no restriction from above on 
$\varepsilon$ is required.

\begin{lmm}
\label{estimate_for_varphi_prime_1}\cd{Let $\varepsilon>0$ be any number and $\varphi$ be the function in (\ref{definition_for_varphi}). Then,}
\begin{equation}
\label{inequality_for_varphi_prime_1}
0 < \varphi^{\prime}(t) t \ln t \leq 8 \varepsilon
\qquad
\forall t \in (1, +\infty).
\end{equation}
\end{lmm}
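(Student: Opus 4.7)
The plan is to substitute the explicit formula for $\varphi'(t)$ from \eqref{varphi_prime} (valid for $t>1$) into the product $\varphi'(t)\, t\, \ln t$ and then rearrange the factors so as to recognise two dimensionless ratios that have already been estimated in Section \ref{pre}. Concretely, for $t \in (1,+\infty)$ I would write
\[
\varphi'(t)\, t\, \ln t
\;=\;
\varepsilon \cdot \frac{\ln t}{\ln\!\bigl(e+(t-1)^{4}\bigr)}
\;\cdot\;
\frac{4(t-1)^{3}\, t}{e+(t-1)^{4}}.
\]

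Each of the three factors $\varepsilon$, $\frac{\ln t}{\ln(e+(t-1)^4)}$ and $\frac{4(t-1)^3 t}{e+(t-1)^4}$ is strictly positive on $(1,+\infty)$, which gives the lower bound $\varphi'(t)\,t\,\ln t>0$. For the upper bound I would apply Lemma \ref{estimate_for_ratio_4} to the first ratio, obtaining $\frac{\ln t}{\ln(e+(t-1)^4)}<1$, and Lemma \ref{estimate_for_ratio_2} to the second ratio (after pulling out the constant $4$), obtaining $\frac{(t-1)^{3}t}{e+(t-1)^{4}}<2$. Multiplying the bounds yields $\varphi'(t)\,t\,\ln t < \varepsilon\cdot 1\cdot 4\cdot 2 = 8\varepsilon$, which is stronger than the claimed non-strict inequality.

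There is no real obstacle here: the lemma is essentially a packaging result that bundles the previous ratio estimates into the combination $\varphi'(t)\, t\, \ln t$ that will be needed later when estimating $g'(t)$ and $g''(t)$ via logarithmic differentiation of $g(t)=t^{a+b\sin\varphi(t)}$. The only small care is to remember that the estimates from Section \ref{pre} are proved only for $t>1$, which is exactly the range under consideration, and that no restriction on $\varepsilon$ is used, consistent with the remark preceding the statement.
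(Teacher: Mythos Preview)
Your proposal is correct and follows exactly the paper's own argument: substitute the formula \eqref{varphi_prime} for $\varphi'(t)$, regroup the factors as $\varepsilon\cdot\frac{\ln t}{\ln(e+(t-1)^4)}\cdot\frac{4(t-1)^3 t}{e+(t-1)^4}$, and then apply Lemmas \ref{estimate_for_ratio_4} and \ref{estimate_for_ratio_2} to bound the two ratios by $1$ and $2$ respectively. There is nothing to add.
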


\begin{proof}
We take into account formula (\ref{varphi_prime}) and estimates (\ref{inequality_for_ratio_4}), (\ref{inequality_for_ratio_2}):
\begin{eqnarray*}
0 < 
\varphi^{\prime}(t) t \ln t = \frac{\varepsilon}{\ln (e + (t-1)^4)} \, \, \frac{4 (t-1)^3}{e + (t-1)^4} t \ln t = 
\\
\varepsilon \frac{\ln t}{\ln (e + (t-1)^4)} \, \, \frac{4 (t-1)^3 t}{e + (t-1)^4} \leq \varepsilon 8.
\end{eqnarray*}

\end{proof}

\begin{lmm}
\label{estimate_for_varphi_prime_2}\cd{Let $\varepsilon>0$ be any number and $\varphi$ be the function in (\ref{definition_for_varphi}). Then,}
\begin{equation}
\label{inequality_for_varphi_prime_2}
0 < \varphi^{\prime}(t) t  \leq 8 \varepsilon
\qquad
\forall t \in (1, +\infty).
\end{equation}
\end{lmm}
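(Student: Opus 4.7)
The plan is to start from the explicit formula (\ref{varphi_prime}) for $\varphi'$ on $(1,+\infty)$, which gives
\[
\varphi'(t)\, t \;=\; \varepsilon \cdot \frac{1}{\ln(e+(t-1)^4)} \cdot \frac{4(t-1)^3 t}{e+(t-1)^4}.
\]
Positivity is immediate since each factor is positive for $t>1$. For the upper bound I would handle the two factors separately.

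First, for $t>1$ we have $(t-1)^4>0$, hence $e+(t-1)^4 > e$, so $\ln(e+(t-1)^4) > 1$, which forces the first factor to be less than $1$. Second, Lemma \ref{estimate_for_ratio_2} already gives $\frac{(t-1)^3 t}{e+(t-1)^4} < 2$, so the remaining factor is bounded by $8$. Multiplying these bounds together yields $\varphi'(t)\,t < 8\varepsilon$, which is exactly (\ref{inequality_for_varphi_prime_2}).

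There is no real obstacle here: this lemma is a weaker companion of Lemma \ref{estimate_for_varphi_prime_1}, obtained simply by dropping the $\ln t$ factor (so one does not need Lemma \ref{estimate_for_ratio_4} at all, only the elementary observation $\ln(e+(t-1)^4)\ge 1$ on $(1,\infty)$ and Lemma \ref{estimate_for_ratio_2}). The result thus follows in one short chain of inequalities.
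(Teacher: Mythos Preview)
Your proof is correct and follows essentially the same route as the paper: both start from (\ref{varphi_prime}), use Lemma~\ref{estimate_for_ratio_2} to bound $\frac{4(t-1)^3 t}{e+(t-1)^4}$ by $8$, and use the elementary fact $\ln(e+(t-1)^4)\ge 1$ to control the remaining factor. The only cosmetic difference is that you spell out this last observation explicitly, whereas the paper leaves it implicit.
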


\begin{proof}
We take into account formula (\ref{varphi_prime}) and estimate (\ref{inequality_for_ratio_2}):
\begin{eqnarray*}
0 < 
\varphi^{\prime}(t) t  = \frac{\varepsilon}{\ln (e + (t-1)^4)} \, \, \frac{4 (t-1)^3}{e + (t-1)^4} t  = 
\\
\varepsilon \frac{1}{\ln (e + (t-1)^4)} \, \, \frac{4 (t-1)^3 t}{e + (t-1)^4} \leq \varepsilon 8.
\end{eqnarray*}

\end{proof}

\begin{lmm}
\label{estimate_for_varphi_prime_prime}\cd{Let $\varepsilon>0$ be any number and $\varphi$ be the function in (\ref{definition_for_varphi}). Then,}
\begin{equation}
\label{inequality_for_varphi_prime_prime}
|\varphi^{\prime\prime}(t)| t^2 \ln t  \leq 128 \varepsilon
\qquad
\forall t \in (1, +\infty).
\end{equation}
\end{lmm}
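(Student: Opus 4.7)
The plan is to start from the explicit expression (\ref{varphi_prime_prime}) for $\varphi''(t)$ on $(1,+\infty)$, pull out the factor $\varepsilon$, and then apply the triangle inequality to reduce the problem to bounding several nonnegative expressions. After multiplying through by $t^2 \ln t$, one gets essentially two main pieces: one coming from the $\left[\frac{4(t-1)^3}{e+(t-1)^4}\right]^2/[\ln(e+(t-1)^4)]^2$ term, and another from the $\frac{12(t-1)^2 e - 4(t-1)^6}{[e+(t-1)^4]^2 \ln(e+(t-1)^4)}$ term; the latter will be split, by triangle inequality, into the $12(t-1)^2 e$ and $4(t-1)^6$ contributions so that each piece is a product of factors that match the shapes already bounded in Section \ref{pre}.

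Next I would rewrite the first term as
$$16\cdot \frac{\ln t}{\ln(e+(t-1)^4)}\cdot \frac{1}{\ln(e+(t-1)^4)}\cdot \left[\frac{(t-1)^3 t}{e+(t-1)^4}\right]^2,$$
and control the three factors by Lemma \ref{estimate_for_ratio_4}, by $\ln(e+(t-1)^4)\geq \ln e=1$, and by Lemma \ref{estimate_for_ratio_2} squared; this yields a bound $\leq 16\cdot 1\cdot 1\cdot 4 = 64$. For the $12(t-1)^2 e$ piece, after inserting the extra factor $\frac{e}{e+(t-1)^4}\leq 1$, what remains is a product of $\frac{\ln t}{\ln(e+(t-1)^4)}$ and $\frac{(t-1)^2 t^2}{e+(t-1)^4}$, which by Lemmas \ref{estimate_for_ratio_4} and \ref{estimate_for_ratio_3} gives at most $12\cdot 1\cdot 4 = 48$. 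Finally, the $4(t-1)^6$ piece rearranges as $4\cdot\frac{\ln t}{\ln(e+(t-1)^4)}\cdot\left[\frac{(t-1)^3 t}{e+(t-1)^4}\right]^2$, bounded by $4\cdot 1\cdot 4 = 16$ again via Lemmas \ref{estimate_for_ratio_4} and \ref{estimate_for_ratio_2}.

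Summing the three contributions gives $64 + 48 + 16 = 128$, and restoring the prefactor $\varepsilon$ yields exactly $|\varphi''(t)| t^2 \ln t \leq 128\varepsilon$, as claimed. No real obstacle is expected here beyond careful bookkeeping of the numerical constant: every factor appearing after the triangle inequality is precisely of one of the four types already estimated in Lemmas \ref{estimate_for_ratio_1}--\ref{estimate_for_ratio_4}, so the proof reduces to a systematic matching of factors with the right preliminary bound.
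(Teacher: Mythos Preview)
Your proposal is correct and follows essentially the same route as the paper's own proof: the paper starts from (\ref{varphi_prime_prime}), multiplies by $t^{2}\ln t$, applies the triangle inequality, and then bounds the three resulting pieces using Lemmas \ref{estimate_for_ratio_2}, \ref{estimate_for_ratio_3}, \ref{estimate_for_ratio_4} together with $\ln(e+(t-1)^4)\geq 1$, obtaining exactly $\varepsilon(4^{3}+48+16)=128\varepsilon$. Your decomposition and numerical bookkeeping match the paper term by term.
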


\begin{proof}
We take into account formula (\ref{varphi_prime_prime}) and estimates (\ref{inequality_for_ratio_2}), (\ref{inequality_for_ratio_3}), 
(\ref{inequality_for_ratio_4}):
\begin{eqnarray*}
|\varphi^{\prime\prime}(t)| t^2 \ln t \leq 
\frac{\varepsilon \ln t}{[\ln (e + (t-1)^4)]^2} \left[ \frac{4 (t-1)^3t}{e + (t-1)^4}\right]^2 +
\phantom{aaaaaaaaaaaaaaaaa}
\\
\frac{\varepsilon \ln t}{\ln (e + (t-1)^4)} \, \, \frac{12 e (t-1)^2 t^2  + 4 (t-1)^6 t^2}{[e + (t-1)^4]^2}
\leq 
\varepsilon (4^3 + 48 + 16) = 128 \varepsilon.
\end{eqnarray*}

\end{proof}

\section{Proof of Theorem \ref{thm_g}}

\noindent
Definitions (\ref{definition_for_g}) and (\ref{definition_for_varphi}) say that, when $t \in [0, 1]$, 
$\varphi(t)=\frac{3}{2}\pi$ and 
$g(t) = t^{a-b}$; condition (\ref{conditions_for_a_b}) guarantees that $1<a-b$ so that 
\begin{equation}
\label{g(0)}
g(0)=0,
\end{equation}

\begin{equation}
\label{g(t)/t_at_0}
\lim\limits_{t \to 0^+}\frac{g(t)}{t}=0,
\end{equation}

\begin{equation}
\label{g^prime(0)}
g^{\prime}(0)=0;
\end{equation}

\noindent
moreover, $g(t)>0$ for $t>0$. We recall that, for $t>1$, $t^{a-b} \leq g(t)$; again, 
condition (\ref{conditions_for_a_b}) guarantees that $1<a-b$ so that

\begin{equation}
\label{g(t)/t_at_infinity}
\lim\limits_{t \to +\infty}\frac{g(t)}{t}=+\infty.
\end{equation}

\noindent
Up to now, $g \in C^0([0, +\infty))$.
For $t>0$ we have

\begin{equation}
\label{esponential_formula_for_g}
g(t) = t^{a+b\sin(\varphi(t))} = e^{[a+b\sin(\varphi(t))]\ln t},
\end{equation}

\noindent
so that

\begin{eqnarray}
\label{g^prime}
g^{\prime}(t) = e^{[a+b\sin(\varphi(t))]\ln t} 
\left\{
[b \cos(\varphi(t))] \varphi^{\prime}(t) \ln t + [a+b\sin(\varphi(t))] \frac{1}{t}
\right\} =
\nonumber
\\
\frac{g(t)}{t}
\left\{
[b \cos(\varphi(t))] \varphi^{\prime}(t) t \ln t + [a+b\sin(\varphi(t))]
\right\}.
\end{eqnarray}

\noindent
If $t \in (0, 1]$, then $\varphi(t) = \frac{3}{2} \pi$ and $\varphi^{\prime}(t) = 0$, so that

\begin{equation}
\label{g_prime_in_(0_1]}
g^{\prime}(t) = \frac{g(t)}{t} [a-b] = [a-b] t^{a-b-1};
\end{equation}

\noindent
again, 
condition (\ref{conditions_for_a_b}) guarantees that $1<a-b$ so that

\begin{equation}
\label{lim_g_prime}
\lim\limits_{t \to 0+} g^{\prime}(t) = 0.
\end{equation}

\noindent
Then, 
$g \in C^1([0, +\infty))$. Using formula (\ref{g^prime}), when $t>0$, we have

\begin{eqnarray}
\label{g^{prime_prime}}
g^{\prime\prime}(t) = 
\frac{g^{\prime}(t) t - g(t)}{t^2}
\left\{
[b \cos(\varphi(t))] \varphi^{\prime}(t) t \ln t + [a+b\sin(\varphi(t))]
\right\} +
\phantom{aaaaa}
\nonumber
\\
\frac{g(t)}{t}
\left\{
[-b \sin(\varphi(t))] \varphi^{\prime}(t) \varphi^{\prime}(t) t \ln t + 
\phantom{aaaaaaaaaaaaaaaaaaaaaaa}
\right.
\nonumber
\\
\left.
[b \cos(\varphi(t))] [\varphi^{\prime\prime}(t) t \ln t + \varphi^{\prime}(t) (\ln t + 1)] + 
[b \cos(\varphi(t))] \varphi^{\prime}(t)
\right\}.
\end{eqnarray}

\noindent
Then $g \in C^2((0, +\infty))$. 
Now we are going to estimate 
$g^{\prime}(t)$ by means of $\frac{g(t)}{t}$.
First of all, we consider the case $t \in (0, 1]$: we can use formula 
(\ref{g_prime_in_(0_1]}) and we get $g^{\prime}(t) = (a-b) \frac{g(t)}{t}$. After that, we deal with $t>1$; 
we use formula (\ref{g^prime}) and estimate (\ref{inequality_for_varphi_prime_1}):

\begin{eqnarray}
\label{g_prime_estimate_1}
\frac{g(t)}{t}
\{-b 8 \varepsilon + a - b\} 
\leq
\phantom{aaaaaaaaaaaaaaaaa}
\nonumber
\\
\underbrace{
\frac{g(t)}{t}
\left\{
[b \cos(\varphi(t))] \varphi^{\prime}(t) t \ln t + [a+b\sin(\varphi(t))]
\right\}}_{g^{\prime}(t)}
\leq
\phantom{aaaaaa}
\nonumber
\\
\frac{g(t)}{t}
\{b 8 \varepsilon + a + b\}. 
\end{eqnarray}

\noindent
Note that $-b 8 \varepsilon + a - b < a-b < a+b < b 8 \varepsilon + a + b$; then

\begin{equation}
\label{g_prime_estimate_2}
\frac{g(t)}{t}
\{-b 8 \varepsilon + a - b\} 
\leq
g^{\prime}(t)
\leq
\frac{g(t)}{t}
\{b 8 \varepsilon + a + b\}
\qquad
\forall t > 0.
\end{equation}

\noindent
Up to now, we only used $a,b>0$, $1<a-b$ and $\varepsilon > 0$. 
Assumption (\ref{epsilon_1}) 
guarantees that
$\varepsilon < \frac{a-1-b}{224 b}$; then $8b\varepsilon < 224 b \varepsilon < a-1-b$, so that
$1<-b 8 \varepsilon + a - b$; this and positivity of $g$ give $g^{\prime}(t) > 0$ when $t>0$. 
Moreover, (\ref{g_prime_estimate_2}) can be written as follows

\begin{equation}
\label{g_prime_estimate_3}
\frac{1}{b 8 \varepsilon + a + b}
g^{\prime}(t)
\leq
\frac{g(t)}{t} 
\leq
\frac{1}{-b 8 \varepsilon + a - b}
g^{\prime}(t)
\qquad
\forall t > 0.
\end{equation}

\noindent
We note that 
 
\begin{equation}
\label{g_in_(1,infinity)}
t^{a-b} \leq  \underbrace{t^{a+b\sin(\varphi(t))}}_{g(t)} \leq t^{a+b}
\qquad
\forall t>1;
\end{equation}

\noindent
then we use estimates (\ref{g_prime_estimate_2}), (\ref{g_in_(1,infinity)}) and positivity of $-b 8 \varepsilon + a - b$:

\begin{equation}
\label{g_prime_estimate_4}
t^{a-b-1}
\{-b 8 \varepsilon + a - b\} 
\leq
g^{\prime}(t)
\leq
t^{a+b-1}
\{b 8 \varepsilon + a + b\}
\qquad
\forall t > 1.
\end{equation}

\noindent
We keep in mind that $g^{\prime}(t) = (a-b) \frac{g(t)}{t} = (a-b) t^{a-b-1}$ for $t \in (0, 1]$; moreover, $-b 8 \varepsilon + a - b < a-b < a+b < b 8 \varepsilon + a + b$; then 

\begin{equation}
\label{g_prime_estimate_5}
t^{a-b-1}
\{-b 8 \varepsilon + a - b\} 
\leq
g^{\prime}(t)
\leq
[t^{a-b-1} + t^{a+b-1}]
\{b 8 \varepsilon + a + b\}
\qquad
\forall t > 0.
\end{equation}

\noindent
We divide by $t$ and we get

\begin{equation}
\label{g_prime_estimate_6}
t^{a-b-2}
\{-b 8 \varepsilon + a - b\} 
\leq
\frac{g^{\prime}(t)}{t}
\leq
[t^{a-b-2} + t^{a+b-2}]
\{b 8 \varepsilon + a + b\}
\qquad
\forall t > 0.
\end{equation}

\noindent
We need to estimate $g^{\prime\prime}(t) t$; to this aim, we use (\ref{g^{prime_prime}}):

\begin{eqnarray}
\label{g^{prime_prime}(t)t}
g^{\prime\prime}(t)t = 
\left[g^{\prime}(t)  - \frac{g(t)}{t}\right]
\left\{
[b \cos(\varphi(t))] \varphi^{\prime}(t) t \ln t + a+b\sin(\varphi(t))
\right\} +
\phantom{aaaaa}
\nonumber
\\
\frac{g(t)}{t}
\left\{
[-b \sin(\varphi(t))] \varphi^{\prime}(t) t \varphi^{\prime}(t) t \ln t + 
\phantom{aaaaaaaaaaaaaaaaaaaaaaa}
\right.
\nonumber
\\
\left.
[b \cos(\varphi(t))] [\varphi^{\prime\prime}(t) t^2 \ln t + \varphi^{\prime}(t) t (\ln t + 2)] 
\right\}.
\end{eqnarray}

\noindent
We keep in mind (\ref{g^prime}) and we can write as follows

\begin{eqnarray}
\label{g^{prime_prime}(t)t_bis}
g^{\prime\prime}(t)t = 
g^{\prime}(t)
\left\{
[b \cos(\varphi(t))] \varphi^{\prime}(t) t \ln t + a-1+b\sin(\varphi(t))
\right\} +
\phantom{aaaaa}
\nonumber
\\
\frac{g(t)}{t}
\left\{
[-b \sin(\varphi(t))] \varphi^{\prime}(t) t \varphi^{\prime}(t) t \ln t + 
\phantom{aaaaaaaaaaaaaaaaaaaaaaa}
\right.
\nonumber
\\
\left.
[b \cos(\varphi(t))] [\varphi^{\prime\prime}(t) t^2 \ln t + \varphi^{\prime}(t) t (\ln t + 2)] 
\right\}.
\end{eqnarray}
For simplicity, define
\begin{flalign*}
&\Phi_{1}(t):=\left[b\cos(\varphi(t))\varphi'(t)t\ln t+a-1+b\sin(\varphi(t))\right]\\
&\Phi_{2}(t):=-b \sin(\varphi(t)) \varphi^{\prime}(t) t \varphi^{\prime}(t) t \ln t + [b \cos(\varphi(t))] [\varphi^{\prime\prime}(t) t^2 \ln t + \varphi^{\prime}(t) t (\ln t + 2)],
\end{flalign*}
in such a way that \eqref{g^{prime_prime}(t)t_bis} reads as
\begin{equation}
\label{g''(t)t}
g''(t)t=g'(t)\Phi_{1}(t)+\frac{g(t)}{t}\Phi_{2}(t).
\end{equation}
By \eqref{inequality_for_varphi_prime_1}, \eqref{inequality_for_varphi_prime_2} and \eqref{inequality_for_varphi_prime_prime} we estimate for $t>1$
\begin{flalign*} 
-8\varepsilon b + a - 1 - b
\leq 
\Phi_{1}(t) \leq 
&|\Phi_{1}(t)|\le 8\varepsilon b + a - 1 + b;
\\
&|\Phi_{2}(t)|\le b 8\varepsilon 8\varepsilon + b [ 128 \varepsilon + 8 \varepsilon + 16 \varepsilon] = b \varepsilon [ 64 \varepsilon + 152].
\end{flalign*}
Now we estimate $g^{\prime\prime}(t)t$ from below; when $t>1$ we keep in mind positivity of $g^{\prime}$, $g$ and estimates for $\Phi_{1}$, $\Phi_{2}$: we have 
$$
g''(t)t=g'(t)\Phi_{1}(t)+\frac{g(t)}{t}\Phi_{2}(t)
\geq
g'(t)\{ -8\varepsilon b + a - 1 - b \} + \frac{g(t)}{t} ( - b) \varepsilon [ 64 \varepsilon + 152 ] =: \cd{\mbox{(I)}};  
$$
now we use the right hand side of (\ref{g_prime_estimate_3}) and we get
$$
\cd{\mbox{(I)}}
\geq
g'(t) \left\{ -8\varepsilon b + a - 1 - b  + \frac{  - b \varepsilon [ 64 \varepsilon + 152 ]}{-8\varepsilon b + a - b} \right\}
=:\cd{\mbox{(II)}};  
$$
now we use (\ref{epsilon_1}): $\varepsilon < 1$ gives $64\varepsilon + 152 < 216$ and 
$\varepsilon < \frac{a-1-b}{224b}$ gives $1<-b8\varepsilon + a -b$, so that

\begin{equation}
\label{216}
\frac{b \varepsilon \{ 64 \varepsilon + 152  \}}{-b8\varepsilon + a - b} < 216 \, b \, \varepsilon;
\end{equation} 
then
$$
\cd{\mbox{(II)}}
\geq
g'(t) \left\{ -8\varepsilon b + a - 1 - b   - 216 \varepsilon b \right\};  
$$
this means that, for $t>1$ we have
$$
g''(t)t
\geq
g'(t) \left\{ - 224 \varepsilon b + a - 1 - b    \right\}.  
$$
Note that 
we required  $
-224 b \varepsilon + a-1-b
>0
$ in our assumption (\ref{epsilon_1}).

\noindent
When $t \in (0, 1]$, we have $\varphi(t) = \frac{3}{2} \pi$, $\varphi^{\prime}(t) = 0 = \varphi^{\prime\prime}(t)$; then
$g^{\prime\prime}(t)t = g^{\prime}(t) (a-1-b)$. Moreover, $g'$ is positive and

\begin{eqnarray}
\label{comparison_2}
a-1-b>a-1-b- 224 b \varepsilon >0,
\end{eqnarray}
 
\noindent
then, 

\begin{equation}
\label{g''(t)t_bis}
g^{\prime\prime}(t)t \geq 
g'(t) \left\{ - 224 \varepsilon b + a - 1 - b    \right\}
\qquad
\forall t>0.
\end{equation}

\noindent
Since $g'(t)>0$ when $t>0$, this last inequality guarantees that $g^{\prime\prime}(t)>0$ for all $t > 0$; then $g^{\prime}$ strictly increases in $(0,+\infty)$; since $g^{\prime}$ is continuous in $[0, +\infty)$, then  $g^{\prime}$ strictly increases in $[0,+\infty)$: this guarantees that $g$ is strictly convex in $[0, +\infty)$.

\noindent
Now we estimate $g^{\prime\prime}(t)t$ from above; when $t>1$ we keep in mind positivity of $g^{\prime}$, $g$ and estimates for $\Phi_{1}$, $\Phi_{2}$: we have 
$$
g''(t)t=g'(t)\Phi_{1}(t)+\frac{g(t)}{t}\Phi_{2}(t)
\leq
g'(t)\{ 8\varepsilon b + a - 1 + b \} + \frac{g(t)}{t} b \varepsilon [ 64 \varepsilon + 152 ] =: \cd{\mbox{(III)}};  
$$
now we use the right hand side of (\ref{g_prime_estimate_3}) and we get
$$
\cd{\mbox{(III)}}
\leq
g'(t) \left\{ 8\varepsilon b + a - 1 + b  + \frac{   b \varepsilon [ 64 \varepsilon + 152 ]}{-8\varepsilon b + a - b} \right\}
=:\cd{\mbox{(IV)}};  
$$
we use (\ref{216}) and we get
$$
\cd{\mbox{(IV)}}
\leq
g'(t) \left\{ 8\varepsilon b + a - 1 + b   + 216 \varepsilon b \right\};  
$$
this means that, for $t>1$ we have
$$
g''(t)t
\leq
g'(t) \left\{  224 \varepsilon b + a - 1 + b    \right\}.  
$$

\noindent
When $t \in (0, 1]$, we have $\varphi(t) = \frac{3}{2} \pi$, $\varphi^{\prime}(t) = 0 = \varphi^{\prime\prime}(t)$; then
$g^{\prime\prime}(t)t = g^{\prime}(t) (a-1-b)$. Moreover, $g'$ is positive so that

\begin{equation}
\label{g''(t)t_tris}
g^{\prime\prime}(t)t \leq 
g'(t) \left\{  224 \varepsilon b + a - 1 + b    \right\}
\qquad
\forall t>0.
\end{equation}

\noindent
This ends the proof of Theorem \ref{thm_g}.
\qed

\section{Another example}
\label{example2}

\noindent
Now we give an example in the subquadratic case by modifing a little bit the previous example of section \ref{example}:
we introduce an additional restriction on $a$, $b$ and we select a smaller $\varepsilon$. More precisely,  
We fix $a,b \in (0, +\infty)$ with (\ref{conditions_for_a_b}) as in section \ref{example}; moreover, we require, in addition,
\begin{equation}
\label{conditions_for_a_b_subquadratic}
a+b<2.
\end{equation}
\noindent
We consider $g:[0, +\infty) \to [0, +\infty)$ given by (\ref{definition_for_g}) with $\varphi$ as in (\ref{definition_for_varphi}) with 
$\varepsilon>0$ satisfing (\ref{epsilon_1}) as in section \ref{example}; moreover, we require, in addition,  

\begin{equation}
\label{epsilon_subquadratic}
\varepsilon < 
\frac{2-a-b}{224\,\,b}
.
\end{equation}
Please, note that \eqref{conditions_for_a_b_subquadratic} gives $0 < 2-a-b$, so the requirement \eqref{epsilon_subquadratic} is in accordance 
with $0<\varepsilon$ and it implies 
$$224 b\varepsilon + a - 2 + b<0.$$
\noindent
This and the right hand side of \eqref{statement_3} 
in Theorem \ref{thm_g} give

\begin{thrm}
\label{thm_g_subquadratic}
Let us consider $a,b \in (0, +\infty)$ verifing (\ref{conditions_for_a_b}), (\ref{conditions_for_a_b_subquadratic}); we consider $g(t)$ given by (\ref{definition_for_g}) where $\varphi$ is defined in (\ref{definition_for_varphi}) and $\varepsilon$ satisfies (\ref{epsilon_1}), (\ref{epsilon_subquadratic}). Then 
$$
 g^{\prime\prime}(t) - \frac{g^{\prime}(t)}{t} \leq \{ 224 b\varepsilon + a - 2 + b \} \frac{g^{\prime}(t)}{t} < 0
\qquad \forall t > 0
$$
and we get $M=1$ in the right hand side of \eqref{assumption_BM}. 
Since
$$
\left(
\frac{g'(t)}{t}
\right)^{\prime}
=
\frac{g''(t)t - g'(t)}{t^2} 
=
\left(
g''(t) - \frac{g'(t)}{t}
\right) \frac{1}{t} < 0,
$$
we get
$$
t \to \frac{g'(t)}{t} \text{ strictly decreases in } (0, +\infty).
$$

\end{thrm}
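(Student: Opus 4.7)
The plan is to reduce everything to the upper bound on $g''(t)$ already secured in Theorem \ref{thm_g}, and then use the extra smallness condition \eqref{epsilon_subquadratic} to push the constant below zero.

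First I would start from the right hand side of \eqref{statement_3}, namely $g''(t) \leq \{224b\varepsilon + a-1+b\}\,g'(t)/t$, valid for all $t>0$. Subtracting $g'(t)/t$ from both sides yields
\[
g''(t)-\frac{g'(t)}{t}\leq \{224b\varepsilon + a - 2 + b\}\,\frac{g'(t)}{t}.
\]
Next, I would use \eqref{epsilon_subquadratic}, which is precisely $224 b\varepsilon < 2-a-b$, to conclude that the bracket $\{224b\varepsilon+a-2+b\}$ is strictly negative. Combined with the positivity of $g'(t)/t$ (established in Theorem \ref{thm_g}), this gives the central inequality of the statement and, in particular, $g''(t)<g'(t)/t$. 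Dividing through by $g'(t)/t > 0$, this means $g''(t)t/g'(t)<1$, which is exactly the constant $M=1$ in the right hand side of \eqref{assumption_BM}.

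Finally, for the monotonicity of $t\mapsto g'(t)/t$, I would simply differentiate:
\[
\left(\frac{g'(t)}{t}\right)'=\frac{g''(t)t-g'(t)}{t^{2}}=\frac{1}{t}\left(g''(t)-\frac{g'(t)}{t}\right),
\]
and invoke the strict inequality just established to see that this derivative is strictly negative on $(0,+\infty)$, so $t\mapsto g'(t)/t$ is strictly decreasing on $(0,+\infty)$.

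There is essentially no obstacle: the whole argument is a one-line algebraic manipulation on top of Theorem \ref{thm_g}, and the only delicate point is checking that the two conditions \eqref{epsilon_1} and \eqref{epsilon_subquadratic} on $\varepsilon$ are compatible, which is immediate because \eqref{conditions_for_a_b_subquadratic} gives $2-a-b>0$ so the upper bound in \eqref{epsilon_subquadratic} is strictly positive.
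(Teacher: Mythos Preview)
Your proposal is correct and follows exactly the paper's approach: the paper also derives the statement directly from the right hand side of \eqref{statement_3} in Theorem~\ref{thm_g} together with the observation that \eqref{conditions_for_a_b_subquadratic} and \eqref{epsilon_subquadratic} force $224b\varepsilon + a - 2 + b < 0$. The monotonicity part is handled identically.
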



 


\end{document}